\theoremstyle{plain}
\newtheorem{theorem}{Theorem}[section]
\newtheorem{lemma}[theorem]{Lemma}
\newtheorem{corollary}[theorem]{Corollary}
\newtheorem{proposition}[theorem]{Proposition}
\theoremstyle{definition}
\newtheorem{definition}[theorem]{Definition}
\theoremstyle{remark}
\newtheorem{remark}[theorem]{Remark}
\newtheorem*{acknowledgements}{Acknowledgements}
\newcommand{\M}{\mathcal{M}}
\newcommand{\del}{\partial}
\newcommand{\cc}{\mathcal{C}}
\newcommand{\tc}{\widetilde{\mathcal{C}}}
\begin{document} 

\title[Triangulating a compactified moduli space]{A triangulation of a homotopy-Deligne-Mumford compactification of the Moduli of curves}

\author{Siddhartha Gadgil}

\address{	Department of Mathematics,\\
		Indian Institute of Science,\\
		Bangalore 560012, India}

\email{gadgil@math.iisc.ernet.in}

\date{\today}

\begin{abstract}
We construct a triangulation of a compactification of the Moduli space of a surface with at least one puncture that is closely related to the Deligne-Mumford compactification. Specifically, there is a surjective map from the compactification we construct to the Deligne-Mumford compactification so that the inverse image of each point is contractible. In particular our compactification is homotopy equivalent to the Deligne-Mumford compactification. 
\end{abstract}

\maketitle

\section{Arc systems and Cells}

We construct a compactification of the Moduli space of a surface with at least one puncture which is homotopy equivalent to the Deligne-Mumford compactification of moduli space. Thus, the (co)homology of the Deligne-Mumford compactification can be computed using our traingulated compactification.

It is easy to see that our methods extend to the case of surfaces with boundaries and punctures (including boundary-punctures). We were motivated by an attempt to obtain a combinatorial description of the Heegaard Floer theory of Ozsvath and Szabo~\cite{OZ1}\cite{OZ2}. This can be viewed as defined in terms of counting pairs of maps~\cite{Li}. One expects that we can thus reduce to computing cup products in spaces related to Deligne-Mumford compactifications of Moduli spaces.   

Our starting point is the well known model of the Moduli space in terms of arc systems due to Harer~\cite{Ha}. We begin by recalling this model. We shall use the version of this from hyperbolic geometry due to Bowditch and Epstein~\cite{BE} (see also~\cite{Pe}).

Let $F$ be a fixed punctured surface of finite type. We shall also regard $F$ as a surface with boundary in the natural way. An \emph{arc system} $\alpha$ is a collection of disjoint, essential, pairwise non-isotopic arcs. Throughout we shall regard isotopic arcs as equal. By splitting a surface $F$ along a proper codimension-one manifold $\alpha$ we mean taking the completion of $F-\alpha$ with respect to the restriction of a complete Riemannian metric on $F$.

\begin{definition}\label{proper}
An arc system $\alpha$ is said to be proper if the each component of $F$ split along $\alpha$ is either a disc or an annulus $A$ with exactly one boundary component contained in $\del F$.
\end{definition}

A \emph{weighted} arc system is an arc system $\alpha$ together with positive weights associated to the arcs in $\alpha$, with weighted arc systems with proportional weights regarded as equal. We call $\alpha$ the support of the weighted arc system.

A model $X(F)$ for the product $\M(F)\times\Delta$ of the moduli space of $F$ with the simplex $\Delta$ with vertices boundary components of $F$ is given by weighted arc systems up to homeomorphism with support a proper arc system. This is naturally a subset of the simplicial complex formed by all weighted arc systems, and inherits the topology of this complex. We get a model of $\M(F)$ by fixing a boundary component $B$ of $F$ and considering weighted arc system with proper support disjoint from all other boundary components.

\subsection{Size relations}
 
Observe that for the boundary components in $\del F$ that are contained in an annular component of  $F$ split along $\alpha$ are exactly those that are disjoint from $\alpha$. We shall call such boundary components \emph{small} with respect to $\alpha$ and other boundary components \emph{large}. 

\begin{remark}\label{smallsep}
If $\alpha$ is a proper arc system, then two small boundary components are in different components of $F$ split along $\alpha$.
\end{remark}

\begin{remark}\label{properscc}
A curve system $\alpha$ is proper if and only if every essential simple closed curve in $F$ that is disjoint from $\alpha$ is isotopic to a boundary component, which is necessarily small.
\end{remark}

We describe analogous properties in the the compactification in terms of a \emph{size relation}.

\begin{definition}
A \emph{size relation} on a (finite) set $S$ is an equivalence relation $\sim$  on $S$ together with a partial order $\ll$ on the set of equivalence classes.
\end{definition}

We shall also regard $\ll$ as a partial order on the set $S$ which is compatible with the given equivalence relation.

\begin{definition}
An element $s\in S$ is said to be \emph{small} if there is an element $s'\in S$ such that $s\ll s'$. Otherwise we say $s$ is \emph{large}. 
\end{definition}

Observe that if $S$ is finite then there are large elements. Consider a size relation $\ll$  on the set of boundary components of $F$.

\begin{definition}
A proper arc system $\alpha$ is said to be compatible with $\ll$ if 
\begin{itemize}
\item All boundary components of $F$ that intersect $\alpha$ are equivalent.
\item If a boundary component $C$ of $F$ is disjoint from $\alpha$ and another component $C'$ intersects $\alpha$, then $C\ll C'$. 
\end{itemize}
\end{definition}

Note that any proper arc system $\alpha$ on $F$ is compatible with some size relation. Namely, we define $C\sim C'$ if and only if both $C$ and $C'$ intersect $\alpha$, and $C\ll C'$ if and only if $C$ is disjoint from $\alpha$ and $C'$ intersects $\alpha$. We call this size relation the \emph{minimal} size relation for $\alpha$.

\subsection{Cells in the strata}

A proper arc system $\alpha$ in $F$ corresponds to a cell with dimension $\vert\alpha\vert -1$, with $\vert\alpha\vert$ the number of arcs in $\alpha$ (the dimension is reduced by one due to projectivisation. We shall consider additional cells in various strata of our compactification. As with the Deligne-Mumford compactification, these strata correspond to \emph{curve systems} $\cc$, i.e., collections of disjoint, essential, pairwise non-isotopic curves. 

Choose and fix a size relation ($\sim,\ll)$ on the components of $\del F$. Let $\cc$ be a curve system and let $\tc=\cc\cup\del F$. Consider a size relation $(\sim,\ll)$ on the components of $\tc$ (which we simply call a size relation on $\tc$) extending the given relation on $\del F$. 

We say that two components $C,C'\subset \tc$ are \emph{adjacent} if they are  both contained in the closure of some component of $F-\cc$. Equivalently, they are both contained in the boundary of a component of $F_{\cc}$.

\begin{definition}\label{permit}
The size relation $(\sim,\ll)$ on $\tc$ is said to be permissible if for every $C\subset \cc$,  there is a component $C'\subset\tc$ adjacent to $C$ such that $C\ll C'$. 
\end{definition}

Fix a permissible relation $\ll$ on $\tc$. Let $F_{\cc}$ be the components of the surface obtained by splitting $F$ along $\cc$. For each component $G$ of $F_{\cc}$, the size relation $\ll$ restricts to a size relation on the boundary components of $G$.

\begin{definition}
A collection of arc systems $\gamma(G)$, $G$ a component of $F$ split along $\cc$, is said to be proper if, for each component $G$, $\gamma(G)$ is a proper arc system which is compatible with the restriction of the size relation $(\sim,\ll)$ to $\del G$.
\end{definition}

Observe that if $\cc$ is empty this is just the set of proper arc systems on $F$ compatible with $(\sim,\ll)$. In general, we associate a cell to a curve system $\cc$ and a collection of arc systems $\gamma(\cdot)$, so that the collection $\gamma(\cdot)$ is proper with respect to $\cc$ and some size relation $\ll$. The points in the cell correspond to weighted arc systems in components $G$ with support $\gamma(G)$ considered up to scaling (separately in each component $G$). We shall call this space, which is a product of simplices, $\Delta(\gamma(\cdot),\cc)$. We denote the union of the products of simplices in the strata corresponding to $\cc$ by $X(\cc)$.

Note that if we are given a proper arc system $\gamma(G)$ on each component, we can consider the minimal size relation on the components of  $\del G$ from $\gamma(G)$. We can extend the relations $\sim$ and $\ll$ to $\tc$ by requiring transitivity. However, this may give the relation $a\ll a$, and thus we do not get a partial order on $\tc$. It is easy to see that a collection of arc systems is compatible with respect to some partial order if, for the transitive relations $\sim$ and $\ll$ generated by the minimal size relations, we do not have a relation of the form $a\ll a$. 

Thus, $X(\cc)$ is the subset of the product of the spaces $X(G)$ given by the condition that the minima size relations on each component generate a size relation on $\tc$. The space $X(\cc)$ inherits its topology from this product of spaces.

\section{Gluing arc systems}

Consider a curve system $\cc$ together with a permissible size relation $\ll$ nd let $F_{\cc}$ be as before. We shall relate proper arc systems on $F$ with proper arc systems on the components $G$ of $F_{\cc}$. We say that an arc $\gamma^j\subset \gamma(G)$ in $G$ is infinitesimal if exactly one of its boundary points lies on a small component of $\del G$.

Consider an arc system $\alpha$ on $F$. Assume that $\alpha$ intersects $\cc$ minimally and transversally. It is well known that in this case $\alpha\cap G$ is well defined up to isotopy for each component $G$.

\begin{definition}
The \emph{restriction} of $\alpha$ to a component $G$ is the arc system $\gamma(G)=res_G(\alpha)$ consisting of the arcs in the completion of $G\cap \alpha$ with both end points on large components, with isotopic arcs identified.
\end{definition}

\begin{lemma}\label{restproper}
Suppose $\alpha$ is an arc system whose restriction to each component $F$ is proper and so that $\alpha$ intersects each component of $\cc$. Then $\alpha$ is proper.
\end{lemma}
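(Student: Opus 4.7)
The plan is to verify Remark \ref{properscc}: we show that every essential simple closed curve $\beta \subset F$ disjoint from $\alpha$ is isotopic to a boundary component of $F$. Place $\beta$ in minimal position with respect to $\cc$ and consider two cases according to whether $\beta$ meets $\cc$.

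If $\beta \cap \cc = \emptyset$, then $\beta$ lies in a single component $G$ of $F$ split along $\cc$ and is essential in $G$ (a curve nullhomotopic in $G$ would be nullhomotopic in $F \supset G$). Since $\beta$ is disjoint from $res_G(\alpha)$, applying Remark \ref{properscc} in $G$ (using the hypothesis that $res_G(\alpha)$ is proper) shows that $\beta$ is isotopic in $G$ to a component of $\del G$. If that component lies in $\del F$ we are done; if it is a doubled copy of some $C\in\cc$, then $\beta\simeq C$ in $F$, so $i(\beta,\alpha)=i(C,\alpha)>0$ (using minimality of $\alpha\cap\cc$ and the hypothesis that $\alpha$ meets every component of $\cc$), contradicting $\beta\cap\alpha=\emptyset$.

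If $\beta\cap\cc\ne\emptyset$, I derive a contradiction by producing a bigon between $\beta$ and $\cc$, violating minimal position. By permissibility of $\ll$, every $C\in\cc$ is small, so every doubled copy $\tilde C$ of a $\cc$-component in $\del G$ is a small boundary component of $\del G$. Since $\tilde C$ is not nullhomotopic in $G$ (else $C$ would be nullhomotopic in $F$, contradicting essentiality) and $res_G(\alpha)$ is proper, such a $\tilde C$ appears only as the single small boundary of an annular component $K$ of $G\setminus res_G(\alpha)$; in particular, the $\del G$-arcs on the alternating boundaries of the components of $G\setminus res_G(\alpha)$ lie entirely on $\del F$-components of $\del G$. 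Hence any arc $\delta$ of $\beta\cap G$, being connected, disjoint from $res_G(\alpha)$, and with both endpoints on $\cc$-copies, lies in such an annular $K$ with both endpoints on its small boundary $\tilde C$.

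To locate the bigon inside $K$, first observe that no arc of $\alpha\cap G$ has both endpoints on small boundary components of $\del G$: any such arc would be boundary-parallel in its containing annulus (every embedded arc in an annulus with both endpoints on one boundary is boundary-parallel), producing either a bigon between $\alpha$ and $\cc$ (violating minimality of $\alpha\cap\cc$) or a boundary-parallel arc of $\alpha$ (contradicting essentiality). So the arcs of $\alpha\cap G$ in $K$ other than the $res_G(\alpha)$-arcs on $\del K$ are infinitesimal arcs running from $\tilde C$ to the alternating boundary of $K$; cutting $K$ along them yields disc pieces, each with exactly one sub-arc of $\tilde C$ on its boundary and no other $\cc$-material. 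The arc $\delta$ lies in one such disc piece with both endpoints on this $\tilde C$-sub-arc, so it cobounds a sub-disc in $G\subset F$ with a sub-arc of $\tilde C\subset \cc$: the desired bigon. The main obstacle is this structural reduction, which combines permissibility, properness of $res_G(\alpha)$, and the minimality of $\alpha\cap\cc$ to confine every arc of $\beta\cap G$ to a boundary-parallel configuration.
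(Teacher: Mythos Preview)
Your Case~1 is fine (modulo the harmless imprecision that $\beta$ might be boundary-parallel rather than essential in $G$; either way it is isotopic to a component of $\partial G$ and your conclusion follows).

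Case~2 has a genuine gap. You assert that ``every doubled copy $\tilde C$ of a $\cc$-component in $\partial G$ is a small boundary component of $\partial G$.'' Permissibility (Definition~\ref{permit}) only guarantees that for each $C\in\cc$ there is \emph{some} adjacent component $G$ containing a $C'$ with $C\ll C'$; it says nothing about the \emph{other} side of $C$. Concretely, take $F$ of genus two with a single boundary component $B$, let $C\in\cc$ separate $F$ into $G_1$ (genus one, $\partial G_1=\{C,B\}$) and $G_2$ (genus one, $\partial G_2=\{C\}$), and take the size relation $C\ll B$. This is permissible, yet in $G_2$ the copy of $C$ is the \emph{only} boundary component, so every proper arc system on $G_2$ meets it and $C$ is large there. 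Your annulus-and-bigon analysis then collapses: an arc $\delta$ of $\beta\cap G_2$ has both endpoints on a large boundary component and need not sit in an annular piece of $G_2\setminus res_{G_2}(\alpha)$ at all.

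The paper's proof avoids this by not trying to control every arc of $\beta\cap G$. Instead it chooses, among the curves of $\cc$ that $\beta$ meets, one $C$ that is \emph{maximal} for $\ll$, and then works in the specific component $G$ witnessing $C\ll C'$. Maximality forces the other endpoint $C''$ of an arc $\eta'\subset\beta\cap G$ (with one end on $C$) to also be small in $G$: were $C''$ large in $G$ one would get $C\ll C''$ with $C''\in\cc$ hit by $\beta$, contradicting maximality. With both ends of $\eta'$ on small boundary components of $G$, Remark~\ref{smallsep} gives the contradiction directly---no bigon hunt needed. Your argument can be repaired by inserting exactly this maximality step before analysing $\beta\cap G$.
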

\begin{proof}
By Remark~\ref{properscc}, it suffices to show that an essential curve $\eta$ that is disjoint from $\alpha$ is homotopic to a small boundary component of $\del F$. Let $\eta$ be an essential simple closed curve in $F$. Assume $\gamma$ intersects $\cc$ minimally.

We first show that $\eta$ is disjoint from $\cc$. Suppose not, of the components of $\cc$ that $\eta$ intersects, let $C$ be a component that is maximal with respect to the given size relation. By Definition~\ref{permit}, there is a component $C'$ adjacent to $C$ with $C\ll C'$. Let $G$ be the component containing $C$ and $C'$ and let $\eta'$ be a component of (the commpletion of) $\eta\cap G$.

By maximality of $C$, the other endpoint of $\eta'$ is also contained in a component $C''$ of $\cc$ so that $C''\ll C'$. Hence both $C$ and $C''$ are small, which contradicts Remark~\ref{smallsep} as the restriction $\gamma(G)$ of $\alpha$ to $G$ is proper.

It follows that $\eta$ is isotopic to a small boundary component $C$ of $\del G$. As $\alpha$ intersects each  component of $\cc$, $C$ is in contained in $
del F$ and is a small boundary component of $F$.

\end{proof}

We next see that any collection of proper arc systems $\gamma(G)$ on components $G$ is the restriction of a proper arc system $\alpha$. Furthermore, the extension of an arc is, in an appropriate sense, at least as large as the given arc. 

\begin{lemma}\label{glue}
Let $\gamma(G)$, $G$ a component of $F_{\cc}$, be a collection of proper arc systems. Then there is a proper arc system $\alpha$ on $F$ so that the following hold.
\begin{enumerate}
\item The restriction of $\alpha$ to $G$ is $\gamma(G)$. 
\item\label{uniqsub} Each component of $\alpha$ intersects a unique component $G_\alpha$ in an arc $\gamma_\alpha$ in $\gamma(G_\alpha)$ and intersects all other components in infinitesimal arcs. 
\item\label{minendp} If the end points of $\gamma_\alpha$ lie on components $C_1$ and $C_{-1}$ of $\tc$, and $C'$ is another component of $\tc$ that $\alpha$ intersects, then $C_1\sim C_{-1}$ and $C_1\ll C'$. 
\item\label{uniqsup} Each component of $\alpha_i$ is contained in a unique component of $\alpha$.   
\end{enumerate}
\end{lemma}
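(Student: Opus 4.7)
My plan is to construct $\alpha$ by taking, for each arc $\gamma_\alpha$ in each $\gamma(G_\alpha)$, the arc itself as a ``main piece'' and extending it across each $\cc$-curve on which one of its endpoints lies, using infinitesimal arcs in the neighboring components. Concretely, if an endpoint of $\gamma_\alpha$ lies on $C\in\cc$, then $C$ is large in $\del G_\alpha$ (it meets $\gamma(G_\alpha)$); compatibility of $\gamma(G_\alpha)$ together with permissibility of $\ll$ on $\tc$ force $C$ to be \emph{small} in the component $G'$ on the other side of $C$, since otherwise any $C^*\gg C$ adjacent to $C$ would have to lie in $\del G_\alpha$, contradicting maximality of $C$ in its $\sim$-class there. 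Hence in $G'$ the proper arc system $\gamma(G')$ has an annular complementary region with $C$ as one of its boundary circles, and through that annulus I draw an infinitesimal arc from $C$ to a point on a large component $C^*$ of $\del G'$.

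I iterate: if $C^*\subset\del F$ the extension on this side is complete; otherwise $C^*\in\cc$, and by exactly the same argument $C^*$ is small in the next component, so we draw another infinitesimal arc. The chain $C\ll C^*\ll C^{**}\ll\cdots$ is strictly increasing in the finite poset $\tc$, so the procedure terminates, and termination can only occur at a component of $\del F$ (every element of $\cc$ is $\ll$-small by permissibility). I perform this at both endpoints of every $\gamma_\alpha$ to obtain an arc $\alpha_\gamma\subset F$; when several extensions must pass through the same small boundary $C\subset\del G'$, I choose them pairwise parallel in the annular region of $G'-\gamma(G')$ adjacent to $C$, which keeps all arcs disjoint. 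Taking the union of the $\alpha_\gamma$ over all $\gamma_\alpha$ defines the candidate arc system $\alpha$.

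From the construction, properties (1)--(4) follow: for (1), each non-main piece of $\alpha$ in a component $G$ is an infinitesimal arc, with one endpoint on a small component of $\del G$, and is therefore discarded by $res_G$, so $res_G(\alpha)=\gamma(G)$; (2) and (4) are built directly into the definition; and (3) holds because compatibility forces the two endpoints of $\gamma_\alpha$ to lie in a single (maximal) $\sim$-class of $\del G_\alpha$, while the strictly increasing chain from the extension shows that every other $\tc$-component met by $\alpha_\gamma$ is $\gg$ that class. Properness of $\alpha$ on $F$ then follows from Lemma~\ref{restproper}, provided $\alpha$ intersects every curve of $\cc$; I expect this to be the main obstacle, requiring a careful combination of compatibility of each $\gamma(G)$ with permissibility of $\ll$ on $\tc$ to argue that every $C\in\cc$ is large on at least one of its two sides, so that some $\gamma(G)$-arc actually ends on it and the extension procedure sweeps across it.
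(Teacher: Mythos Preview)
Your construction and reasoning mirror the paper's proof essentially step for step: the paper too extends each $\gamma^j$ one endpoint at a time through the annular complementary region on the far side of a $\cc$-curve (using permissibility to see that a curve which is arc-large in $G$ must be arc-small in the neighbouring component), iterates along a strictly $\ll$-increasing chain in the finite set $\tc$ until it terminates at $\del F$, and then invokes Lemma~\ref{restproper} for properness. The paper likewise cites Lemma~\ref{restproper} directly after the construction without separately checking that $\alpha$ meets every curve of $\cc$, so the obstacle you flag is one the paper does not dwell on; be aware, however, that your proposed resolution --- that every $C\in\cc$ is large on at least one of its two sides --- does not follow from permissibility alone, since permissibility only guarantees that $C$ is small on \emph{some} side and does not preclude its being small on both.
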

\begin{proof}
We shall extend each arc $\gamma^j\subset\gamma(G)$ to a proper arc $\alpha^k$ in $F$ by attaching infinitesimal arcs disjoint from all the other arcs of the collections $\gamma(G')\subset G'$. By iterating this procedure, we obtain the system $\alpha$.

Let $C_1$ and $C_{-1}$ be the curves in $\del G\subset \cc$ on which the end points of $\gamma^j$ lie. By definition of a proper system, both these curves are large in $G$, in particular $C_1\sim C_{1-}$. Hence, as $\ll$  is permissible, if  $C_1$ is not in $\del F$, then $C_1$ is small in the other component $G_1$ in which it is contained. It follows that it is the boundary of an annulus $A$ in the surface obtained from $G_1$ by splitting along $\gamma(G_1)$. Let $C_2$ be a component of $\del G_1$ that intersects $A$. Extend $\gamma^j$ by an infinitesimal arc from $C_1$ to $C_2$, which can be assumed to be disjoint from any given collection of infinitesimal arcs if needed (choosing $C_2$ appropriately). We temporarily denote this extension $\alpha^k$.

Observe that $C_2$ is large in $G_1$ and hence $C_1\ll C_2$. Thus, by permissibility of $\ll$, if $C_2$ is not in $\del F$, $C_2$ is small in the other component in which it is contained. Iterating the above construction, we get a sequence of components $C_1\ll C_2\ll C_3\ll\dots $ and extensions of $\alpha^k$ of $\gamma^j$ by infinitesimal arcs. As $\tc$ has finitely many components, this process must terminate with some $C_j\subset \del F$ and an extension of $\gamma^j$ to an arc $\alpha^k$ with an end point in $\del F$. The same procedure applied to $C_{-1}$ gives an extension of $\gamma^j$ to a proper arc $\alpha^j$ in $F$.

Applying this procedure to each arc in each arc system $\gamma(G)$ in succession, and noting that this can be done keeping the new arcs disjoint, we get an arc system $\alpha$ whose restriction to each component $G$ is $\gamma(G)$. This is proper by Lemma~\ref{restproper}.

The rest of the statements are evident from the construction.

\end{proof}

The condition~(\ref{uniqsup}) in Lemma~\ref{glue} is purely for notational convenience later. On the other hand, the above proof shows that conditions~(\ref{uniqsub}) and~(\ref{minendp}) are automatically satisfied if $\alpha$ restricts to proper curve systems on each component $G$.

\section{Inclusion maps}

We now describe when one cell is contained in the closure of the other and the associated topology. First, we recall the case of cells when $\cc$ is empty.

In this case, simplices are associated to proper arc systems. Consider two proper arc systems $\alpha$ and $\alpha'$ and the associated cells $\Delta(\alpha)$ and $\Delta(\alpha')$. Then $D(\alpha)$ is contained in the closure of $\Delta(\alpha')$ if and only if $\alpha\subset\alpha'$. A weighed arc system in $\Delta(\alpha)$ can be regarded as a weighted arc system corresponding to $\alpha'$ with weights $0$ for the curves in $\alpha'-\alpha$. This gives a natural topology on $\Delta(\alpha)\cup \Delta(\alpha')$.

Next, we consider inclusions of a cell $\Delta(\gamma(\cdot),\cc)$ in the stratum corresponding to $\cc$ in a cell $\Delta(\beta)$ corresponding to a proper arc system $\beta$ in $F$. Assume $\beta$ intersects $\cc$ minimally. For each component $G$ of $F$ split along $\cc$, let $\beta(G)=\beta\cap G$. The cell $\Delta(\gamma(\cdot),\cc)$ is in the closure of $\Delta(\beta)$ if $\gamma(G)\subset \beta(G)$ for all components $G$. Note that, for each component $G$, as $\gamma(G)$ is proper,  $\beta(G)$ is a union of infinitesimal arcs and an arc system in $G$ that is proper with respect to the minimal order from $\gamma(\cdot)$.

Note that any weighted arc system $\xi$ with support $\beta$ gives, for each component $G$, a weighted arc system $\zeta(G)=Res_G(\xi)$ on $G$ by associating to an arc $\beta^j(G)\in\beta(G)$ the sum of the coefficients of arcs in $\beta$ whose intersection with $G$ is $\beta^j(G)$. A weighted arc system on $G$ with support $\gamma(G)$ can be regarded as a weighted arc system on $\beta(G)$ with weights $0$ for arcs not in $\beta(G)$. In this manner we obtain a natural topology on $\Delta(\beta)\cup\Delta(\gamma(\cdot),\cc)$. 

Finally, consider two cells $\Delta_1= \Delta(\gamma_1(\cdot),\cc_1)$ and $\Delta_2=\Delta(\gamma_2(\cdot),\cc_1)$. For $\Delta_1$ to be contained in $\Delta_2$, we require that $\cc_1\supset \cc_2$. We can then regard the cell $\Delta_1$ as corresponding to cells in the components $G$ of $F$ split along $\cc_2$. We are thus reduced to the previous case.

\section{Compactness}

We next see that the union $\bar{X}(F)=\cup_{\cc} X(F,\cc)$ of the strata we have constructed is compact. Note that $X(F)=X(F,\phi)$ corresponds to the empty collection. We shall often omit $F$ from the notation if it is clear from the context.

\begin{theorem}\label{compactness}
The union $\bar{X}=\cup_{\cc} X(\cc)$ of simplices in strata over all curve systems (up to homeomorphism) is compact.
\end{theorem}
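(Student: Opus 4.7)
The plan is to exhibit $\bar{X}$ as a finite union of compact cells whose gluing topology is compatible with sequential convergence, and then conclude by extracting convergent subsequences. I would carry this out in three steps.

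First, I would verify that up to homeomorphism there are only finitely many cells in $\bar{X}$. By a standard Harer-style argument, the mapping class group of $F$ acts with only finitely many orbits on isotopy classes of curve systems $\cc$, and for each $\cc$ the components of $F_{\cc}$ are surfaces of bounded complexity admitting only finitely many proper arc systems up to the mapping class group. Since size relations on the finite set of components of $\tc$ are finite in number, this gives finitely many cells $\Delta(\gamma(\cdot),\cc)$ in total.

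Second, each cell $\Delta(\gamma(\cdot),\cc)$ is by construction a product of simplices, one simplex of dimension $|\gamma(G)|-1$ per component $G$ of $F_{\cc}$, so its closure is a compact product of closed simplices. From the inclusion maps described in Section~3, the faces of such a closure are themselves cells of $\bar{X}$, either in the same stratum (some weights dropping to zero while $\cc$ is unchanged or shrinks) or in a larger stratum corresponding to $\cc' \supsetneq \cc$ obtained by pinching weight-zero arcs to curves.

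Third, I would deduce sequential compactness. Given a sequence $\{x_n\} \subset \bar{X}$, the finiteness of cell types lets me pass to a subsequence with all $x_n$ in a single closed cell $\overline{\Delta}(\gamma(\cdot),\cc)$, and compactness of this closed product of simplices yields a further convergent subsequence with limit $x_\infty$ in the closed cell. I then identify $x_\infty$ with a point of some cell $\Delta(\gamma'(\cdot),\cc')$ in $\bar{X}$, using the inclusions of Section~3.

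The main obstacle is carrying out the identification in the third step when weights go to zero and the residual support $\gamma'(G) \subseteq \gamma(G)$ fails to be proper on some component $G$. By Remark~\ref{properscc}, this failure is witnessed by essential simple closed curves in $G$ disjoint from $\gamma'(G)$ that are not isotopic to small boundary components; taking one representative from each such isotopy class produces a new curve system $\cc'' \subset G$ which, adjoined to $\cc$, gives the curve system $\cc' \supsetneq \cc$ of the limit stratum. After splitting along $\cc'$, the residual data $\gamma'(\cdot)$ restricts to a proper arc system on each component of $F_{\cc'}$, and one must check that the given permissible size relation on $\tc$ extends to a permissible size relation on $\cc' \cup \del F$ compatible with this data; this follows from the same transitivity consistency criterion used in Section~1 to recognize which $\gamma(\cdot)$ give cells, together with the permissibility of $\ll$ on $\tc$. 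Once this limit identification is in hand, compactness of $\bar{X}$ is immediate from the compactness of each closed product of simplices and the finiteness of cells.
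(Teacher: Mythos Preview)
Your overall architecture (finitely many cell types, compact closed cells, pass to a subsequence in one cell, identify the limit) is sound and matches the paper's framing. The gap is in your third step, precisely at the ``main obstacle'' you flag.

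First, the construction of $\cc''$ as ``one representative from each such isotopy class'' is ill-posed: once $\gamma'(G)$ fails to be proper, the complementary subsurface typically carries infinitely many isotopy classes of essential simple closed curves, and representatives of distinct classes need not be disjoint, so you do not get a curve system this way. The paper instead takes $F_+$ to be a regular neighbourhood of $\alpha_+$ together with the boundary components it meets, and lets the new curves be the essential components of $\partial F_+$; this is a well-defined finite disjoint collection.

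Second, and more seriously, your claim that ``the residual data $\gamma'(\cdot)$ restricts to a proper arc system on each component of $F_{\cc'}$'' is false. On the components of $F_{\cc'}$ disjoint from the positive-limit arcs, the restriction of $\gamma'$ is \emph{empty}, and the empty arc system is not proper on any component more complicated than a disc or an annulus. A point of the stratum $X(\cc')$ requires a proper weighted arc system on \emph{every} component, so you have not produced a point of $\bar X$. The missing idea is that the data on those ``small'' components comes from a \emph{different scale}: one restricts the original sequence $\xi_i$ (not its limit) to the complementary subsurface $F_1$, rescales so that the total weight there is $1$, and then repeats the entire argument on $F_1$. This is why the paper's proof is by induction on the complexity of the surface: the rescaled restriction to $F_1$ is a sequence in $X(F_1)$ with $F_1$ of strictly lower complexity, so the induction hypothesis furnishes a subsequential limit in some $X(F_1,\cc'')$, and the global limit lies in the stratum for $\cc\cup\cc'\cup\cc''$. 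Without this rescale-and-induct step, the limit point simply is not identified.
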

\begin{proof}
We shall prove this by induction on the complexity of the surface $F$. Here the complexity of a surface is the maximum number of arcs in an arc system on $F$.

First, consider a sequence of points  $\xi_i\in X(F)$. As there are only finitely many arc systems up to homeomorphism, by passing to a subsequence we can assume that these  points have support a fixed arc system $\alpha$. Further, as the weights all lie in $[0,1]$ and have sum $1$, by passing to a subsequence, we can assume that the weights converge to numbers $c^j\in [0,1]$ corresponding to the components $\alpha^j$ of $\alpha$, with the sum of the numbers $c^j$ equal to $1$.

Let $\alpha_+\subset \alpha$ be the (non-empty) set of arcs for which the coefficients have positve limit. Let $F_+$ be a regular neighbourhood of the union of the arcs in $\alpha_+$ and the boundary components of $F$ which intersect $\alpha_+$. We let $\cc$ be the collection of curves consisting of the boundary of components of $\del F-F_+$ that are not annuli and central circles of components that are essential annuli (i.e., annuli that are not parallel to a boundary component of $\del F$). Up to isotopy, $F_+$ is a collection of components of the surface obtained from $F$ by splitting along $\cc$. Let $F_1$ be the union of the components not in $F_+$.

By definition of $F_+$, $\alpha_+\subset F_+$ and the weighted arc systems $\zeta_i$ consisting of arcs in $\alpha_+$ with weights those in $\xi_i$ have a limit which is proper. Thus, for each component in $F_+$ we obtain a limiting weighted arc system. 

The surface $F_1$ has a lower complexity than $F_+$. We consider the restriction $\eta_i$ of $\xi_i$ to $F_1$, i.e., the arc system with support the intersection of the support of $\xi_i$ with $F_1$ and with the coefficient of an arc in $\eta_i$ the sum of the coefficients of arcs in $\xi_i$ that contain the given arc. By induction, on passing to a subsequence (and considering weights up to scaling in each component of $F_1$) we obtain a limit, which in general lies in a stratum corresponding to a curve system $\cc'$.

It is easy to see that the sequence $\xi_i$ converges to a point in the stratum corresponding to $\cc\cup \cc'$, with the size relation extended so that for each component $F'$ of $F_+$, the components of $\del F'\cap \del F_+$ are equivalent and large while the other components are small.

In general, as the number of curve systems is finite we can assume that a sequence of points is contained in a fixed stratum corresponding to a curve system $\cc$. We then apply the above argument to each component of the surface split along $\cc$.

\end{proof}

We shall use inductive arguments as in the above theorem. To do this, it is useful to observe a lemma regarding convergence to the compactification.

Suppose a sequence of weighted arc systems $\xi_i\in X$ with support $\alpha$ (assumed fixed) converges to a point $\bar\xi$ in the stratum corresponding to a curve system $\cc$ (with a corresponding permissible size relation). Assume that the sum of the coefficients of each of  the weighted arc systems $\xi_i$ is $1$.  Let $\alpha_+$ be the set of arcs in $\alpha$ whose coefficients do not converge to $0$ and let $\del_+ F$ be the union of boundary components whose coefficients (i.e., the total coefficients of arcs on them) do not converge to $0$. Assume $\alpha$ intersects $\cc$ minimally. Let $F_+$ be the union of the components of $F$ split along $\cc$ that intersect $\del_+ F$.

\begin{lemma}\label{poscomp}
We have $\alpha_+\subset F_+$ and $\alpha_+$ is a proper arc system in $F_+$.
\end{lemma}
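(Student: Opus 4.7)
The plan is to leverage the convergence hypothesis through the following key observation: if $c_i^j \to c^j > 0$ and $\alpha^j$ meets a component $G$ of $F_{\cc}$, then $\alpha^j\cap G$ must be an arc of $\gamma(G)$. This is because in the limiting cell $\Delta(\gamma(\cdot),\cc)$ the arcs in $\alpha(G)\setminus \gamma(G)$ carry weight zero (after the appropriate rescaling of each component's simplex), while the weight of $\alpha^j\cap G$ in $Res_G(\xi_i)$ is at least $c_i^j$ and hence bounded away from zero.

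With this observation, the first claim reduces to ruling out arcs $\alpha^j\in\alpha_+$ that cross $\cc$. Suppose $\alpha^j$ crosses some $C\in\cc$, and let $G,G'$ be the two components of $F_{\cc}$ adjacent to $C$. By the observation, $\alpha^j\cap G\in\gamma(G)$ and $\alpha^j\cap G'\in\gamma(G')$, each with an endpoint on $C$. Since the arcs of a proper compatible arc system have their endpoints on maximal components of the relevant boundary, $C$ is large in both $G$ and $G'$. On the other hand, permissibility provides some $C'\in\tc$ adjacent to $C$ with $C\ll C'$, and any such $C'$ must lie in $\del G\cup \del G'$, contradicting maximality of $C$ in both components. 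Hence $\alpha^j$ lies inside a single component $G_0$ of $F_{\cc}$; since the endpoints of $\alpha^j$ lie on $\del F\cap G_0$ and contribute $c^j>0$ to the limiting boundary weights, those boundary components belong to $\del_+F$ and so $G_0\subset F_+$.

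For the second claim I would establish that $\alpha_+\cap G=\gamma(G)$ for each $G\subset F_+$. The inclusion $\alpha_+\cap G\subset \gamma(G)$ is immediate from the observation. Conversely, each $\delta\in\gamma(G)$ has positive weight in the limit, so some $\alpha^k$ with $\alpha^k\cap G=\delta$ must satisfy $c^k>0$, and the first claim forces this $\alpha^k$ to coincide with $\delta$ as an arc of $G$. Since $F_+$ is a disjoint union of components of $F_{\cc}$ with $\del G\subset\del F_+$ for each, and each $\gamma(G)$ is proper in $G$, the complementary regions of $\alpha_+$ in $F_+$ are exactly the complementary discs and annuli of the various $\gamma(G)$, each meeting $\del F_+$ in a single boundary component. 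This yields properness of $\alpha_+$ in $F_+$. The main technical hurdle will be the key observation of the first paragraph: because the cells $\Delta(\gamma(\cdot),\cc)$ projectivize the weights in each component of $F_{\cc}$ independently, one must track the relative scalings of the restrictions $Res_G(\xi_i)$ across different $G$ as $i\to\infty$ in order to cleanly justify that the weights of arcs in $\alpha(G)\setminus\gamma(G)$ vanish in the limit. Once this bookkeeping is set up, the geometric heart of the proof, i.e.\ the permissibility contradiction, is short and direct.
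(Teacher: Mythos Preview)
Your proposal is correct and follows essentially the same approach as the paper: both arguments rest on the observation that an arc of $\alpha_+$ meeting a component $G$ must appear in $\gamma(G)$ (since total weights in $G$ are bounded while the arc's coefficient stays positive), and then derive the identical permissibility contradiction---the paper phrases it as ``$C$ is small in some adjacent $G$, yet $\gamma(G)$ touches $C$,'' which is just the contrapositive of your ``$C$ is large on both sides.'' For properness, the paper argues that in each $G\subset F_+$ the total weight is bounded away from zero so that the support of the projective limit is exactly $\alpha_+\cap G$, which is your two-inclusion argument $\alpha_+\cap G=\gamma(G)$ in slightly compressed form.
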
  
\begin{proof}
First we claim $\alpha$ is disjoint from $\cc$. Suppose not, then let $C\in \cc$ intersect $\alpha$. Then, as the size relation in permissible, $C$ is small in some component $G$ of $F$ split along $\cc$. However, $C$ intersects an arc in $\alpha_+$ whose coefficients do not converge to $0$, and hence do not do so on projectivisation in $G$ (as the total coefficients of the restriction of $\xi_i$ to $F_j$ is at most $1$, the total coefficient of $\xi_i$). This means that $\bar\xi$ contains  an arc  $\gamma^j$ in $G$ with positive coefficient with an endpoint of $\gamma^j$ on  $C$. This contradicts the assumption that $C$ is small in $F_j$.

Thus, $\alpha_+$ is contained in a union of components of $F$ split along $\cc$. It is clear from the definition that these are exactly the components of $F_+$.
 
Next, if $G$ is a component of $F_+$, by the definition of $F_+$ the total coefficients of the restriction of $\xi_i$ to $G$ do not converge to $0$. By passing to a subsequence we can assume they converge to a positive number. Hence, the coefficients of an arc in $\alpha\cap G$ in the projective limit in $G$ converge to $0$ if and only if they converge to $0$ in $\xi_i$ (without projectivising). Hence the support of the limit in $F_j$ is $\alpha_+\cap G$, which is proper by definition of the thin strata. This completes the proof.

\end{proof}

We next observe that each of the strata $X(\cc)$ is contained in the closure of the moduli space $X$. Thus $\bar{X}$ is genuinely a compactification of $X$.

\begin{proposition}
Every point in $\bar{X}$ is the limit of points in $X$.
\end{proposition}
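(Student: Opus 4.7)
The plan is to show that any point $p$ in a cell $\Delta(\gamma(\cdot),\cc)\subset X(\cc)$, specified by a curve system $\cc$, a permissible size relation on $\tc$, a proper collection $\gamma(\cdot)$ on $F_{\cc}$, and per-component positive representatives $w_j^G>0$ of the projective weights on each $\gamma^j\in\gamma(G)$, is a limit of points in $X=X(\phi)$. First I would apply Lemma~\ref{glue} to produce a proper arc system $\alpha$ on $F$ restricting to $\gamma(G)$ on each component $G$ of $F_{\cc}$. By property~(\ref{uniqsub}), every $\alpha^k\in\alpha$ has a unique home component $G_k$ in which it coincides with a main arc $\gamma^{j(k)}=\gamma_{\alpha^k}\in\gamma(G_k)$, and its intersection with any other component consists of infinitesimal arcs.

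Next I would choose a linear extension of the partial order $\ll$ on the $\sim$-classes of $\tc$ and assign to each class a positive integer level $\ell$ so that $A\ll B$ implies $\ell(A)<\ell(B)$. For each component $G$, let $\ell(G)$ be the level of the class $L(G)$ of boundary components of $G$ that are hit by $\gamma(G)$; set $N=\max_G \ell(G)$ and $\tau_\epsilon(G)=\epsilon^{N-\ell(G)}$ for $\epsilon\in(0,1)$. Define $\xi_\epsilon\in X$ to be the weighted arc system with support $\alpha$ in which $\alpha^k$ carries weight $\tau_\epsilon(G_k)\,w_{j(k)}^{G_k}$, globally rescaled so the weights sum to $1$. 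The global rescaling does not affect per-component projective classes, which is all that the topology of $\bar X$ records on each stratum.

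Finally I would verify $\xi_\epsilon\to p$ as $\epsilon\to 0$ by checking, for each component $G$, that the projective class of $Res_G(\xi_\epsilon)$ converges to the class of $(w_j^G)_j$ on $\gamma(G)$, in the sense of the inclusion topology of Section~3. A main arc $\gamma^j\in\gamma(G)$ has weight $\tau_\epsilon(G)\,w_j^G$ in $Res_G(\xi_\epsilon)$. An infinitesimal arc in $G$ must come from some $\alpha^{k'}$ with home $G_{k'}\ne G$; then $\alpha^{k'}$ intersects at least one boundary component $C\in \del G\cap\tc$, and Lemma~\ref{glue}(\ref{minendp}) gives $C_1\ll C$ for $C_1$ an endpoint of $\gamma_{\alpha^{k'}}$. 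Since $C_1\in L(G_{k'})$ and $\ell(C)\leq \ell(G)$ (with equality iff $C\in L(G)$, and otherwise $C\ll L(G)$ by compatibility of $\gamma(G)$ with $\ll$), this yields $\ell(G_{k'})<\ell(G)$ and hence $\tau_\epsilon(G_{k'})/\tau_\epsilon(G)\leq\epsilon$. Dividing $Res_G(\xi_\epsilon)$ by $\tau_\epsilon(G)$ therefore leaves main-arc weights constant at $w_j^G$ while sending infinitesimal weights to $0$, producing the required projective limit. The main obstacle is that the various components of $F_{\cc}$ must degenerate at different rates inside one globally defined sequence $\xi_\epsilon\in X$; the level function $\ell$ lifted from a linear extension of $\ll$ supplies the compatible hierarchy of scales, and Lemma~\ref{glue}(\ref{minendp}) is the precise geometric ingredient ensuring that every infinitesimal contribution into $G$ comes from a strictly lower-level home component.
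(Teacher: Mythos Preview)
Your proposal is correct and follows essentially the same approach as the paper: both apply Lemma~\ref{glue} to obtain a proper arc system $\alpha$ on $F$, assign an integer level to each $\sim$-class of $\tc$ compatible with $\ll$ (the paper's $\kappa$ is the negative of your $\ell$ up to a shift), weight each arc $\alpha^k$ by a power of a small parameter determined by the level of its home component, and then use Lemma~\ref{glue}(\ref{minendp}) to show that infinitesimal contributions in any component $G$ are of strictly lower order than the main arcs, so the per-component projectivisations converge. Your verification of the inequality $\ell(G_{k'})<\ell(G)$ via the case split on whether $C$ is large or small in $G$ is slightly more explicit than the paper's, but the argument is the same.
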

\begin{proof}
A point $\bar\xi\in\bar X$ corresponds to a curve system $\cc$ and a weighted arc system $\bar{\xi}(G)$ in each component $G$ of $F$ split along $C$, with support a collection of proper arc systems $\gamma(G)$ in $F$. We assume that the coefficients $c(\gamma^j)$ of the arcs in each surface $G$ have sum $1$.

By Lemma~\ref{glue}, we can find an arc system $\alpha$ restricting to  the systems $\gamma(G)$ and a bijective correspondence(by statement~(\ref{uniqsup}) between the arcs $\alpha^k$ in $\alpha$ and the union of arcs $\gamma^j$ in the arc systems $\gamma(G)$ given by $\gamma^j\subset \alpha^k$. Hence the given weighted arc systems give a collection of coefficients $c(\alpha^k)=\gamma^j$ associated to the components of $\alpha$.

Consider the permissible size relation $(\sim,\ll)$ associated to the point $\bar\xi$. We associate integers $\kappa(C)$ to the components of $\tc=\cc\cup\del F$ so that if $C\sim C'$ then $\kappa(C)=\kappa(C')$ and if $C\ll C'$, $\kappa(C)>\kappa(C')$. Each arc $\alpha^k$ contains a unique arc $\gamma^j$ in some component $G$. The  end points of $\gamma^j$ lie in components $C$ and $C'$ that satisfy $C\sim C'$. Hence we can define $k(\alpha^j)=k(C)$. Further, as all large boundary components of a component $G$ are equivalent, we can define $\kappa(G)=\kappa(C)$ for $C$ any large boundary component. We then have $\kappa(\alpha_i^k)=\kappa(G)$. 

By statement~(\ref{minendp}) of Lemma~\ref{glue}, if an arc $\alpha^l$ intersects $G$ in an infinitesimal arc, then $\kappa(\alpha^l)>\kappa(G)$. Consider the sequence $\xi_i$ of points in $X$ corresponding to weighted arc systems with support $\alpha$ and with the coefficient of the arc $\alpha^k$ being $\delta^{-\kappa(\alpha^k)}c(\alpha^k)$. We claim that this converges to the point $\bar\xi$. For, the restriction of $\xi_j$ to a component $G$ is the sum of $\delta^{G}\bar\xi_i$ and terms with coefficients $O(\delta^l)$ with $l>k$. It follows that on projectivisation the restrictions converge to $\bar{\xi_j}$. As this holds for all components $G$ of $F$ split along $\cc$, the claim follows.
 
\end{proof}

\section{The canonical map to the Deligne-Mumford compactification}

Let $\M(F)$ denote the Moduli space of curves and $\bar\M(F)$ the Deligne-Mumford compactification. Then $\bar{\M}(F)=\cup_\cc \M(\cc)(F)$ is a union of strata corresponding to curve systems $\cc$. The stratum corresponding to $\cc$ is the moduli space of the surface obtained from $F$ by splitting along $\cc$.

Thus, the map from proper weighted arc systems of a surface to the moduli space of the surface, applied to $F$ and surface obtained by splitting $F$ along curve systems gives a map $\Phi:\bar{X}(F)\to \bar{\M}(F)$

We define the \emph{weight} of a curve $C$ with respect to a weighted arc system to be the maximum of the coefficients of arcs that intersect $C$ essentially.

\begin{theorem}\label{cont}
The map $\Phi$ is continuous.
\end{theorem}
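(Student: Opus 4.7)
The plan is to prove continuity of $\Phi$ at an arbitrary point $\bar\xi$ of $\bar X$. Inside a single stratum $X(\cc)$, the map $\Phi$ is just a product, over components $G$ of $F_{\cc}$, of the classical Bowditch--Epstein correspondence between proper weighted arc systems on $G$ and points of $\M(G)$ (with a cusp weight). This is known to be a homeomorphism onto its image, in particular continuous. So the real content is to show continuity \emph{across} strata: if a sequence $\xi_i \in \bar X$ converges to $\bar\xi$ in the stratum corresponding to a curve system $\cc$, then $\Phi(\xi_i) \to \Phi(\bar\xi)$ in $\bar\M(F)$.

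First I would reduce the set-up. By the finiteness of curve systems up to homeomorphism, after passing to a subsequence I may assume every $\xi_i$ lies in a fixed stratum $X(\cc_i)$ with $\cc_i$ common. An induction on the complexity of $F$ and on the codimension of the stratum then lets me assume $\cc_i = \emptyset$, i.e., $\xi_i \in X(F)$. Normalise weights to sum to $1$, and by Lemma~\ref{poscomp} further pass to a subsequence so the support of $\xi_i$ is a fixed proper arc system $\alpha$, intersecting $\cc$ minimally, with $\alpha_+ \subset F_+$ and $\alpha_+$ a proper arc system on each component of $F_+$.

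Next I would verify convergence of the moduli data piece by piece. For each component $G \subset F_+$, the restrictions $\mathrm{res}_G(\xi_i)$, after rescaling so their weights on $G$ sum to $1$, converge to $\bar\xi|_G$ in $X(G)$; applying the classical continuity of the Bowditch--Epstein map on $G$ yields convergence of the moduli points in $\M(G)$. For a component $G' \subset F_{\cc} \setminus F_+$, the total weight of $\xi_i|_{G'}$ tends to $0$, but after renormalising within $G'$ the same analysis (using Lemma~\ref{poscomp} on $G'$, which is a strictly simpler surface, and invoking the inductive hypothesis) gives convergence to $\bar\xi|_{G'}$ in $\M(G')$.

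The essential remaining step is to show that the curves of $\cc$ actually pinch, i.e., that in the hyperbolic structure represented by $\Phi(\xi_i)$ the geodesic representative of each $C \in \cc$ has length tending to $0$; this is precisely convergence to the nodal surface of the stratum in $\bar\M(F)$. Here one invokes the geometric content of Bowditch--Epstein: the hyperbolic length of $C$ is controlled by the \emph{weight of $C$} (in the sense defined just before the statement), and in particular tends to $0$ when this weight does. Since $C \subset \cc$ is disjoint from $\alpha_+$, every arc of $\alpha$ crossing $C$ has coefficient tending to $0$, so the weight of $C$ tends to $0$, and hence so does its hyperbolic length. Combining the three ingredients---convergence on each $G \subset F_+$, convergence on each $G' \not\subset F_+$, and pinching of each curve in $\cc$---gives convergence in $\bar\M(F)$.

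I expect the hardest point to be the quantitative link between arc weights and curve lengths used in the last step. It is symmetric in spirit to the standard Bowditch--Epstein estimates (short horocyclic arcs around a cusp correspond to a thin collar), but must be extended uniformly as one simultaneously approaches a whole codimension-$|\cc|$ stratum; organising this uniformity, rather than the combinatorial bookkeeping, is the main obstacle.
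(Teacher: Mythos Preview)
Your outline diverges from the paper's argument at the key step, and the divergence introduces a gap beyond the pinching estimate you flag at the end.

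The paper does not try to prove directly that the curves of $\cc$ pinch. Instead it uses compactness of $\bar\M(F)$: pass to a subsequence so that $z_i=\Phi(\xi_i)$ already converges in $\bar\M(F)$ to some $\bar z$, and let $\cc$ be the curve system of the stratum containing $\bar z$. Thus $\cc$ is read off from the \emph{geometric} limit, pinching is built in, and the problem becomes to show $\bar z=\Phi(\bar\xi)$. This is done via a matching lemma: the union $F^+$ of components of $F_{\cc}$ meeting $\del_+F$ coincides with $F_+$, the regular neighbourhood of $\alpha_+\cup\del_+F$. The proof runs hyperbolic geometry in the forward direction: since the hyperbolic structures on $F^+$ converge (to the $F^+$-part of $\bar z$), the Bowditch--Epstein spines converge, hence so do the dual weighted arc systems, and one reads off that their support is exactly $\alpha_+$. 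Lemma~\ref{poscomp} gives the same description of $F_+$ from the side of $\bar\xi$, so the top layers of $\bar z$ and $\Phi(\bar\xi)$ agree, and one inducts on the complement.

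Your route runs the implication the other way and, besides the pinching step, contains a second gap. You assert that since $\mathrm{res}_G(\xi_i)\to\bar\xi|_G$, ``applying the classical continuity of the Bowditch--Epstein map on $G$ yields convergence of the moduli points in $\M(G)$.'' But Bowditch--Epstein on $G$ applied to $\mathrm{res}_G(\xi_i)$ produces a cusped hyperbolic structure on $G$; this is \emph{not} the restriction to $G$ of the hyperbolic structure $\Phi(\xi_i)$ on $F$, which before pinching gives $G$ geodesic boundary along $\cc$, not cusps, and whose spine restricted to $G$ is not the Bowditch--Epstein spine of $G$. What you actually need is that the latter converges to $\Phi_G(\bar\xi|_G)$, and that does not follow from continuity of the map on $G$. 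The paper's spine-convergence argument is precisely what supplies this link, and the compactness-of-$\bar\M$ trick lets the paper bypass your pinching estimate entirely. So even if you succeed in proving that small arc-weight across $C$ forces small hyperbolic length of $C$, you will still need essentially the same hyperbolic input the paper uses; the paper's ordering of the argument simply avoids the extra work.
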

\begin{proof}
Consider first a sequence of points $\xi_i\in X$ converging to a point $\bar\xi\in\bar X$. As in the proof of Theorem~\ref{compactness}, we can assume that $\xi_i$ have a fixed support $\alpha$ and the coefficients of each arc in $\alpha$ converge to a non-negative real number. 

As $\bar{M}$ is compact, some subsequence of $z_i=\Phi(\xi_i)$ converges to a limit $\bar{z}$. Clearly it suffices to show that for every such convergent subsequence $\bar{z}=\Phi(\bar\xi)$. Hence it suffices to consider the case where the sequence $z_i$ converges to a limit $\bar{z}$.

Assume that $\bar{z}$ lies in the stratum corresponding to $\cc$. Let $\del_+ F$ be the set of components of $\del F$ whose coefficients do not converge to $0$ and $\alpha_+$ be the set of arcs in $\alpha$ whose coefficients do not converge to $0$. Let $F_+$ be a regular neighbourhood of $\alpha_+\cup \del_+ F$. Recall that this can also be described in terms of the limit as in Lemma~\ref{poscomp}.

Let $F^+$ be the union of components of $F$ split along $\cc$ that intersect $\del_+ F$. 

\begin{lemma}
We have $F_+=F^+$.
\end{lemma}
\begin{proof}
We shall use the correspondence between $X$ and $\M$ using hyperbolic geometry due to Bowditch-Epstein. In their construction, the weighted arc system is determined by a spine which is the locus of the points for which the shortest arc joining the point to the union of an appropriate collection of horospheres is not unique. The total coefficient of each boundary component is fixed and the horocycles are determined by these coefficients. The weighted arc system is dual to the spine constructed and the weight of an arc is determined by the length subtended by the side dual to this arc in a horosphere.  

The hyperbolic structure on the surface $F^+$, with the curves of in $\cc$ represented by geodesics, converges to a cusped hyperbolic structure on $F^+$. It is easy to see that the spine, and hence the weighted arc system also converge to those for the limiting hyperbolic structure, with the total boundaries of the cusps assigned as $0$ and those of the curves in $F^+\cap\del F$ the corresponding limits. By comparing total coefficients, it also follows that the coefficients of arcs not contained in $F^+$  vanish.

As the coefficients of arcs not contained in $F^+$ vanish, $\alpha_+\subset F^+$. Further, as the total coefficients in each component of $F^+$ do not vanish, the support of the limiting arc system is $\alpha_+$, which is hence proper. The claim follows. 

\end{proof} 

The rest of the proof follows by induction on the complexity of the surface, using the description of $F_+$ in terms of the limiting curve system for $\xi_i$ from Lemma~\ref{poscomp}. Namely, if $\cc'$ is the curve system corresponding to the limit $\bar{\xi}$, we observe that the components of $F$ split along $\cc'$ intersecting $\del_+ F$ are isotopic to the components of $F$ split along $\cc$ intersecting $\del_+ F$. In each of these components continuity follows from that of the Bowditch-Epstein construction. We then proceed by induction to complete the proof.
\end{proof}

\section{Fibres of the canonical map}

Finally, we see that the fibres of $\Phi$ are contractible.

\begin{theorem}
For a point $y\in\bar{\M}$, the fibre $\Phi^{-1}(y)$ is contractible.
\end{theorem}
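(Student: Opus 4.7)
The point $y$ lies in a unique open Deligne--Mumford stratum $\M(\cc)$, and since $\Phi$ preserves strata we have $\Phi^{-1}(y)\subset X(\cc)$. Using the inclusion $X(\cc)\hookrightarrow\prod_G X(G)$ together with the Bowditch--Epstein identification $X(G)\cong\M(G)\times\Delta_G$ (where $\Delta_G$ is the closed simplex on $\del G$), the fiber $\Phi^{-1}(y)$ identifies with the subspace of $\prod_G\Delta_G$ consisting of collections of projective nonnegative boundary weights $(w^G)_G$ whose combined minimal size relations assemble into a consistent, permissible size relation on $\tc$. Concretely, $C\subset\del G$ is \emph{large in $G$} iff $w_C^G>0$, and permissibility demands that each $C\in\cc$ be small in at least one of its two adjacent components while being flanked there by some large neighbor.

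My plan is to exhibit $\Phi^{-1}(y)$ as star-shaped about an explicit base point $w^\ast$. In the principal case where every component $G$ of $F_\cc$ meets $\del F$, let $w^\ast$ place uniform positive weight on $\del F\cap\del G$ and zero weight on $\cc\cap\del G$ within each $G$. A direct check confirms $w^\ast\in\Phi^{-1}(y)$: every $C\in\cc$ is small in both adjacent components and has a large $\del F$-boundary as permissibility witness, and no cycle in the induced $\ll$ can form because $\del F$-components never appear on the small side of any minimal relation. For any $w\in\Phi^{-1}(y)$, the straight-line homotopy $w_t=(1-t)w^\ast+tw$ taken coordinate-wise within each $\Delta_G$ stays in $\Phi^{-1}(y)$: for $t\in(0,1]$ the positivity pattern on $\cc$-coordinates matches that of $w$ since $(w^\ast)_C^G=0$ on $\cc$, so the $\ll$-relations among $\cc$-components coincide with those at $w$ and hence remain cycle-free; the additional $\ll$-relations at $w_t$ all end in $\del F$ and cannot close cycles; and permissibility is inherited from $w$.

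For the general case where some $G$ is internal ($\del F\cap\del G=\emptyset$), I would refine $w^\ast$ as follows. Existence of a permissible size relation on $\tc$ forces every internal component to have a finite path in the dual graph of $F_\cc$ leading to a non-internal component. For each internal $G$ let $C^G_\ast\in\del G$ be the first edge of a shortest such path, and let $w^\ast$ place all of its $G$-factor weight on $C^G_\ast$ (with non-internal components handled as before). The same straight-line star-shape argument then applies, with the distinguished boundaries $C^G_\ast$ taking the role of $\del F$ in the cycle and permissibility checks: the strict monotone decrease of ``distance-to-non-internal'' along the new $\ll$-chains prevents cycles, and the shortest-path recipe ensures that if $C=C^G_\ast$ for some internal $G$ then the next component on the path does not choose $C$ as its own distinguished outlet, yielding permissibility for each such $C\in\cc$.

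The main obstacle is the combinatorial construction of a coherent set of distinguished outlets $\{C^G_\ast\}$ in the internal case and the verification that this refined $w^\ast$ lies in $\Phi^{-1}(y)$. The key observation is that closed internal-only islands in $F_\cc$ are excluded a priori, since such islands admit no $\ll$-maximal element adjacent to $\del F$ and hence no permissible size relation, so the shortest-path-to-a-non-internal construction always succeeds. Once $w^\ast$ is in hand, the coordinate-wise straight-line homotopy provides a deformation retraction of $\Phi^{-1}(y)$ onto $w^\ast$, establishing contractibility.
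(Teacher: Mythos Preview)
Your approach is genuinely different from the paper's: the paper proceeds by induction on complexity, peeling off a component $F_0$ that meets $\del F$ and exhibiting the restriction map $Z\to Z_1$ (to the remaining components) as a projection with simplex fibres, then handling the passage from $Z$ to $\Phi^{-1}(y)$ separately. You instead attempt a direct star-shaped contraction, and in the principal case your convexity check is essentially sound: since $w^\ast$ vanishes on all $\cc$-coordinates, the $\cc$-positivity pattern along the segment agrees with that of $w$, while $\del F$-components are never on the small side of a generating relation, so any putative cycle contracts to a cycle among $\cc$-components already present at $w$.

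There is, however, a genuine gap in your identification of the fibre. Points of $\bar X$ are weighted arc systems \emph{up to homeomorphism of $F$}, and $y\in\bar\M$ is likewise an equivalence class. The subspace of $\prod_G\Delta_G$ that you describe is what the paper calls $Z$: isotopy classes of weighted arc systems mapping to a fixed isotopy-class representative of $y$. The actual fibre $\Phi^{-1}(y)$ is the quotient of $Z$ by the finite automorphism group of $y$, and deducing contractibility of this quotient from contractibility of $Z$ is nontrivial --- the paper invokes Armstrong's theorem on $\pi_1$ of orbit spaces, Oliver's acyclicity theorem, and Whitehead's theorem to close this gap. In your principal case the base point $w^\ast$ is automorphism-invariant and the linear homotopy equivariant, so the contraction may well descend; but in the general case your outlets $C^G_\ast$ depend on non-canonical shortest-path choices, which need not be preserved by automorphisms permuting the components of $F_\cc$. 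A secondary issue is that the internal-component cycle check is only sketched: forcing $C^G_\ast$ large in $G$ introduces new $\sim$-generators in $G$ (not only new $\ll$-generators), and ruling out cycles that weave through both kinds via your distance function requires a more careful inductive argument than the one line you provide.
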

\begin{proof}
The point $y$ lies in a stratum corresponding to some curve system $\cc$. We again proceed inductively. 

Firstly, choose and fix an isotopy class of complex structures on $F$ corresponding to the point $y$ in moduli space. Let $Z$ be the set of isotopy classes of weighted arc systems that correspond to the given isotopy class of complex structure on $F$. We first show that $Z$ is contractible by induction on the complexity of the surface. We then deduce that the fibre $\Phi^{-1}(y)$ is contractible.

Let $F_0$  be a component of $F$ split along $\cc$ containing a component of $\del F$ and $F_1$ be the union of the other components of $F$ split along $\del F_0$. Let $\cc_1$ consist of the curves in $\cc$ in the interior of $F_1$. 

The point $y$ corresponds to a pair of points $(y_0,y_1)$, with $y_0$ in the moduli space of $F_0$ and $y_1$ in the stratum of the compactification of the moduli space of $F_1$ corresponding to $\cc_1$.

Let $Z_1$ be the set of collections of weighted arc systems on the components of $F_1$ split along $\cc'$, which correspond to a permissible size relation, that map to $y_1$ under the corresponding canonical map. By the induction hypothesis, $Z_1$ is contractible. 

There is a natural projection from $Z$ to $Z_1$ by restricting arc systems and permissible relations. We show that each fibre of this map is a disc. As $Z_1$ is contractible, this shows (using, for instance, ~\cite{DK} or~\cite{La}), that $Z$ is contractible.

Consider a point  $z_1\in Z_1$ and let the corresponding minimal size relation be $(\sim, \ll)$. Let $\cc_0$ be the set of boundary components of $\del F_0\cap\del F_1$ that are large in $\del F_1$ (hence small in $\del F_0)$. 

Then the fibre of $z_1$ in $Z$ corresponds to collections of non-negative real numbers with sum $1$ associated to the components of $\del F_0$ so that the minimal size relation on $F_1$ extends to one on $F$. This means that all the boundary components in $\cc_0$ are small in $F_0$. Further, if $C\subset \del F_0$ is contained in $\del F_1$ and there is a component $C'\subset \cc_0$ with $C\ll C'$ in the minimal order generated by the arc system on $F_1$, then $C$ must be small in $F_0$. 

Let $\del^+ F_0$ consist of the components of $\del F_0$ that are not in $\cc_0$ and are not smaller than any element of $\cc_0$. Note that $\del^+ F_0$ is non-empty as it contains $\del F_0-\del F_1$. As each component in $\del^+ F_0$ is either not in $\del F_1$ or is small in a component of $F_1$, both the relations $\ll$ and $\sim$ restricted to $\del^+ F_0$ are empty. 

Hence a weighted arc system on $F_0$ and the given weighted arc system on $F_1$ form an admissible arc system with respect to a size relation on $\cc_0$ if and only if  the components of $\cc_0$ are all small. It follows that  the fibre of the projection is the simplex spanned by the components of $\del^+ F_0$, and is hence a disc, as claimed. By the induction hypothesis, it follows that $Z$ is contractible.
 
Finally, note that $\Phi^{-1}(y)$ is the quotient of $Z$ by the group of automorphisms of the finite group. By passing to a barycentric subdivision, $\Phi^{-1}(Y)$ is the quotient of a compact, contractible space by a simplicial action of a finite set. This has trivial fundamental group by a theorem of Armstrong (as each group element has a fixed point by, for example, the Lefschetz fixed point theorem). By a theorem of Oliver, $\Phi^{-1}(Y)$ is also acyclic (and locally contractible). By Whiteheads theorem it follows that $\Phi^{-1}(Y)$ is contractible.

\end{proof}

\begin{corollary}
The space $X(F)$ is homotopy equivalent to the Deligne-Mumford compactification of the moduli space of $F$.
\end{corollary}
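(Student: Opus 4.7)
The plan is to combine the results of the last two theorems with a standard criterion for when a map with contractible point inverses is a homotopy equivalence. Theorem~\ref{cont} supplies a continuous map $\Phi\co\bar{X}(F)\to\bar{\M}(F)$, Theorem~\ref{compactness} guarantees that $\bar{X}(F)$ is compact so that $\Phi$ is automatically proper (as $\bar{\M}(F)$ is Hausdorff), and the preceding theorem guarantees that every fibre of $\Phi$ is contractible. The corollary should therefore be almost formal once these three inputs are assembled.

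First I would verify that $\Phi$ is surjective. A point $y\in\bar{\M}(F)$ lying in the stratum corresponding to a curve system $\cc$ determines a complete finite-area hyperbolic structure on each component $G$ of $F$ split along $\cc$, with cusps precisely on the components of $\del G$ that do not lie on $\del F$. Applying the Bowditch--Epstein construction to each such $G$ produces a proper weighted arc system on $G$, and these assemble into a point of $X(\cc)$ mapping to $y$ under $\Phi$.

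Next I would note that $\bar{X}(F)$ is a finite CW complex: its cells are the products of simplices $\Delta(\gamma(\cdot),\cc)$ over curve systems $\cc$, glued according to the inclusion maps described in Section~3, and there are only finitely many such cells up to homeomorphism. The space $\bar{\M}(F)$ is a compact complex orbifold and in particular has the homotopy type of a CW complex. Both spaces are therefore ANRs. With these ingredients in hand, $\Phi$ is a proper surjection between compact ANRs all of whose point inverses are contractible, and I would invoke Smale's theorem (or equivalently the Vietoris--Begle mapping theorem combined with Whitehead's theorem, in exactly the same spirit as the end of the preceding proof) to conclude that $\Phi$ is a weak homotopy equivalence and hence, by Whitehead, a homotopy equivalence.

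The main obstacle I anticipate is technical rather than conceptual: verifying that each fibre, which by the preceding theorem is the quotient of a contractible space by a finite group action, has the shape-theoretic properties (cell-likeness, or equivalently trivial shape) required by the chosen Smale/Vietoris--Begle-type theorem. This is essentially the same point already handled at the end of the fibre theorem via the Armstrong--Oliver--Whitehead route, so no new surface-theoretic input should be required; one simply packages the pointwise conclusion into a statement about the map $\Phi$.
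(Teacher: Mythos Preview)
Your proposal is correct and is exactly the argument the paper has in mind: the corollary is stated without proof because it follows from the three preceding theorems (compactness of $\bar X(F)$, continuity of $\Phi$, contractibility of the fibres) together with the cell-like/Vietoris--Begle mapping criterion already cited as \cite{DK} and \cite{La}. Your extra remarks on surjectivity and ANR structure just make explicit the hypotheses needed to apply that criterion.
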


\section{Local homoeomorphism type}

A natural question to ask is whether the compactification we construct is in fact homeomorphic to the Deligne-Mumford compactification. We show that this is not so as the space $X(F)$ need not be locally an orbifold.

Let $F_1$ be the compact surface of genus two with two boundary components $C_1$ and $C_2$ and let $F_0$ be the surface of genus $0$ with $3$ boundary components. Let $F$ be the surface obtained by identifying two of the boundary components of $F_0$ with the boundary components of $F_1$.

Consider the stratum of the compactification of the moduli space of $F$ corresponding to the curve system $\{C_1,C_2\}$. A point $y$ in this stratum is determined by points $y_i$ in the moduli spaces of the surfaces $F_i$ for $i=0,1$. The moduli space of the surface $F_0$ is a sinlge point, and $y_0$ must be this point. let $y_1$ be a point in the moduli space of $F_1$ with trivial automorphism group.

Consider a point $z$ in the fibre $\Phi^{-1}(y)$ corresponding to weighted arc systems with the total weights of the curves $C_1$ and $C_2$ in $F_1$ both $1/2$. Such a point is unique as the weights of these curves in $F_0$ must be $0$. 

\begin{proposition}
A neighbourhood of the point $z$ in $X(F)$ is homeomorphic to the cone on an iterated suspension of a $2$-torus.
\end{proposition}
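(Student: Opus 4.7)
The plan is to describe a small neighborhood of $z$ in $X(F)$ by identifying all cells whose closures contain $z$, and to show they assemble into the cone on an iterated suspension of $T^2$. The map $\Phi$ provides a guide: the Deligne--Mumford local model at $y$ is $D^{10}\times D^2\times D^2$ with link $S^{13}$, and I want to see how the fiber structure of $\Phi$ replaces the ``join'' linking the two smoothings by a ``product.''

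First, I would pin down the cell of the stratum $X(\cc)$ with $\cc=\{C_1,C_2\}$ whose interior contains $z$. The permissibility of the size relation at $z$ (which forces $C_1\sim C_2$ in $F_1$ and $C_1,C_2\ll B$ in $F_0$) restricts the support $\gamma(F_0)$ to arcs with both endpoints on $B$, as the proposition statement already indicates. On the pair of pants $F_0$ this leaves only the few ``$B$-to-$B$'' arc configurations. Since $y_1$ has trivial automorphism group and the boundary weights of $F_1$ at $z$ are the interior point $(1/2,1/2)$ of $\Delta^1_{F_1}$, the $F_1$ factor of the stratum near $z$ is a smooth manifold of real dimension $11=\dim_{\mathbb{R}}\M(F_1)+1$. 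A careful accounting shows that the $F_0$ factor contributes no tangential dimensions at $z_0$: any enlargement of the $F_0$ arc-system support either fails permissibility (when an added arc meets $C_1$ or $C_2$) or survives only as a transverse direction. Hence the stratum neighborhood at $z$ is an open $11$-ball with link $S^{10}$.

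Next I would analyze the transverse directions. These come from cells in $X(\emptyset)$, $X(\{C_1\})$, $X(\{C_2\})$, as well as higher-dimensional cells within $X(\cc)$ having $z$ in their closure. Using Lemma~\ref{glue}, each such cell in $X(\emptyset)$ is obtained by attaching to $(\gamma(F_0),\gamma(F_1))$ infinitesimal arcs crossing $C_1$ or $C_2$ and terminating at $B$ through $F_0$. The key combinatorial observation is that the two annular components of $F_0$ split along $\gamma(F_0)$ enclosing $C_1$ and $C_2$ are disjoint, so the combinatorial choices made near $C_1$ and near $C_2$ are completely independent. For each $C_i$, the cells interpolating between $X(\cc)$, $X(\{C_j\})$ (with $j\ne i$) and $X(\emptyset)$ form a closed cyclic arrangement --- reflecting the fact that smoothing the single curve $C_i$ cellularly gives an $S^1$ link factor, just as in the one-curve case the compactification agrees with Deligne--Mumford. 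Independence of the two circles gives transverse link $S^1\times S^1=T^2$, not $S^1*S^1=S^3$.

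Combining, the link of $z$ in $X(F)$ is $S^{10}*T^2=\Sigma^{11}T^2$, and a neighborhood of $z$ is homeomorphic to the cone on this space, as claimed. The main obstacle is establishing the product (rather than join) structure of the two circles in the transverse link, which is precisely the phenomenon that distinguishes $X(F)$ from a local orbifold at $z$ and from the Deligne--Mumford compactification. This independence, ultimately traceable to the disjointness of the two annular pieces of $F_0$ at $z_0$, is where one has to do real combinatorial work, cataloguing all the cells across the strata $X(\emptyset)$, $X(\{C_1\})$, $X(\{C_2\})$, and higher cells of $X(\cc)$ and verifying their incidences; everything else --- the stratum factor and the passage from link to cone --- is comparatively routine.
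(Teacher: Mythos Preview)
Your outline is sound and reaches the right answer, but the paper takes a different, more geometric route. Rather than analysing the cell structure of $X(F)$ directly, the paper uses the continuous map $\Phi\colon\bar X(F)\to\bar\M(F)$ to import Fenchel--Nielsen--type coordinates from the Deligne--Mumford side. Near $y$, $\bar\M(F)$ is a ball $V\subset\M(F_1)$ times two discs whose polar coordinates are the length and twist parameters of $C_1$ and $C_2$. Passing to $X(F)$ one gains the extra parameter $\alpha$ (the ratio of the $F_1$-weights on $C_1$ and $C_2$); once a point of $V$ and a value of $\alpha$ are fixed, the remaining slice is the cone on the $2$-torus of twist parameters, because the two ``lengths'' are governed by a single scale. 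Suspending over the $V$- and $\alpha$-directions then yields the iterated suspension. Thus in the paper the torus appears for free as the twist torus, whereas in your approach the product (rather than join) structure of the two transverse circles is exactly the step you flag as requiring real combinatorial work. Your method is more intrinsic to the cell complex and avoids the analytic input behind $\Phi$, at the cost of a longer case analysis; the paper's method is shorter but leans on the geometric content of the earlier continuity theorem.
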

\begin{proof}
A neighbourhood of $y$ in the Deligne-Mumford compactification is the product of a neighbourhood $V$ of $y_1$ in $\M(F_1)$ with discs corresponding to length and twist parameters for the curves $C_1$ and $C_2$. The neighbourhood $V$ can be chosen homeomorphic to a ball.

In the space $X(F)$, a neighbourhood of $F$ is a subset $W$ of the product of $U$ with discs corresponding to the length and twist parameters, with $W$ consisting of weighted arc systems with the ratio $\alpha$ of the weights of the curves $C_1$ and $C_2$ close to $1/2$. The subset of $W$ corresponding to a fixed point in $U$ and a fixed ratio $\alpha$ is thus a cone on the torus corresponding to the twist parameters. 

It follows that $W$ is the cone on an iterated suspension of tori.
\end{proof}

We remark that the existence of a cell-like homotopy equivalence between spaces that are not homeomorphic does not contradict~\cite{Se} as the space $X$ is not a manifold.

\begin{acknowledgements}
I thank Dennis Sullivan for his invaluable comments and suggestions.
\end{acknowledgements}

\end{document}